\newcommand{\rec}{\overline{\operatorname{cr}}}
\newcommand{\RR}{\ensuremath{\mathbb R}}  
\newcommand{\ZZ}{\ensuremath{\mathbb Z}}  
\newcommand{\EE}{\ensuremath{\mathbb E}}  
\DeclareMathOperator{\area}{area}
\newtheorem{theorem}{Theorem}
\newtheorem{lemma}[theorem]{Lemma}
\title{A Note on the 2-Colored Rectilinear Crossing Number of Random Point Sets in the Unit Square}
\begin{document}
\author{%
Sergio Cabello\thanks{Faculty of Mathematics and Physics, University of Ljubljana, Slovenia, 
and Institute of Mathematics, Physics and Mechanics, Slovenia.
\texttt{sergio.cabello@fmf.uni-lj.si}.}
\and 
\'Eva Czabarka\thanks{Department of Mathematics, University of South Carolina, Columbia, \texttt{czabarka@math.sc.edu}} 
\and
Ruy Fabila-Monroy \thanks{Departamento de Matem\'aticas, Cinvestav. \texttt{ruyfabila@math.cinvestav.edu.mx}} 
\and
Yuya Higashikawa \thanks{Graduate School of Information Science, University of Hyogo, Japan. \texttt{higashikawa@gsis.u-hyogo.ac.jp}} 
\and
Raimund Seidel\thanks{Saarland University, Germany. \texttt{rseidel@cs.uni-saarland.de}}
\and
L\'aszl\'o Sz\'ekely \thanks{Department of Mathematics, University of South Carolina, Columbia, \texttt{szekely@math.sc.edu}} \and
Josef Tkadlec \thanks{Computer Science Institute, Charles University, Czech Republic, \texttt{josef.tkadlec@iuuk.mff.cuni.cz}}
\and
Alexandra Wesolek
\thanks{Department of Mathematics, Technische Universit\"at Berlin, Germany, \texttt{wesolek@tu-berlin.de}
}
}
\maketitle

\begin{abstract}
Let $S$ be a set of four points chosen independently, uniformly at random from a square. Join every pair of points of $S$ with a straight line segment. Color these edges red if they have positive slope and blue, otherwise. We show that the probability that $S$ defines a pair of crossing edges of the same color is equal to $1/4$. This is connected to a recent result of Aichholzer et al. [GD 2019] who showed that by 2-colouring the edges of a geometric graph and counting monochromatic crossings instead of crossings, the number of crossings can be more than halfed. Our result shows that for the described random drawings, there is a coloring of the edges such that the number of monochromatic crossings is in expectation $\frac{1}{2}-\frac{7}{50}$ of the total number of crossings.
\end{abstract}

\section{Introduction}

Let $R$ be a closed bounded convex set in the plane, and let $q(R)$ be the probability that four points chosen uniformly at random from $R$ form the vertices of a convex quadrilateral. The problem of computing $q(R)$ was proposed by Sylvester~\cite{4point} in 1868 and became known as \emph{Sylvester's Four-Point Problem}.
Woolhouse~\cite{square} showed that if $R$ is a square then
\[q(R)=\frac{25}{36}.\]

A \emph{geometric graph} is a graph whose vertex set is a set of points in general position in the plane,
and its edges are straight line segments joining these points. Let $D$ be a geometric graph on $n$ vertices in general position. The \emph{number of crossings} of $D$ is the number of pairs of its edges that intersect in their interior. We denote this number with $\rec (D)$. 
Linearity of expectation implies that, if the vertices of $D$ are chosen uniformly and independently at random from $R$, then \[ \EE[\rec (D)] = q(R)\cdot \binom {n}{4}.\]

Let $\chi$ be an edge coloring of $D$. Let $\rec (D,\chi)$ be the number of pairs of edges  of $D$
of the same color that cross. If $\chi$ is a random edge coloring of $D$, in which every edge
is assigned one of two colors with probability $1/2$, then linearity of expectation implies that
\[\EE[\rec (D,\chi)] = \frac{1}{2} \cdot \rec(D).\] 
Recently, Aichholzer et al.~\cite{2colored}
showed that there exists a constant $c >0$, such that if $D$ is a complete geometric graph, then there exists
an edge $2$-coloring, $\chi$, of $D$ such that 
\[\rec (D,\chi) = \left (\frac{1}{2}-c \right ) \cdot \rec(D).\] 
In this paper, we consider the case when the vertices of a complete geometric graph $D$ are chosen at random from the unit square $[0,1] \times [0,1].$ We show the following theorem.
\begin{theorem} \label{thm:main}
Let $S$ be a set of four points chosen independently and uniformly at random from the unit square.
Join every pair of points of $S$ with a straight line segment. Color each such edge red if it has positive slope
and blue otherwise\footnote{With this convention, horizontal edges (slope 0) and vertical edges (undefined slope) are blue.
In any case, these boundary cases are negligible.}. 
Then the probability that $S$ defines a pair of
crossing edges of the same color is equal to $1/4$. 
\end{theorem}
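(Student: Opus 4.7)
The plan is to reduce the computation to a standard integration by exploiting linearity of expectation and the symmetries of the setup. Since at most one pair of edges can cross among any four points in general position (the diagonals of their convex quadrilateral, when it exists), the desired probability equals the expected number of monochromatic crossings in the complete geometric graph on the four random points. By linearity of expectation summed over the three unordered pairs of disjoint edges of $K_4$, combined with the label symmetry of the four i.i.d.\ points, this expectation equals $3 \cdot P(AB \cap CD \neq \emptyset \text{ and } AB, CD \text{ have the same color})$.

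Next, I would apply the symmetry $Y \mapsto 1-Y$ on all four points: it preserves the uniform distribution and the crossing event while swapping red and blue, giving $P(\text{cross and same color}) = 2 P(\text{cross and both positive slope})$. So the target reduces to showing $P(AB \cap CD \neq \emptyset$, both slopes positive$) = 1/24$. Since $AB$ and $CD$ are independent random segments (with two i.i.d.\ uniform endpoints each), and the event both slopes positive has probability $1/4$ by the independence of their slope signs and the symmetry between positive and negative slopes, this is equivalent to showing that two such independent positive-slope random segments cross with conditional probability exactly $1/6$.

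Conditioned on having positive slope, a random segment can be parametrized as going from $(U_1, V_1)$ to $(U_2, V_2)$, where $(U_1, U_2)$ and $(V_1, V_2)$ are independently distributed as order statistics of two i.i.d.\ uniforms on $[0,1]$. I would then partition the probability by conditioning on the interleaving of the four $x$-coordinates of the two segments; by the symmetry between the two segments, the six interleavings collapse into three cases (disjoint $x$-intervals, partial overlap, and nested $x$-intervals). In each overlap case, the crossing event translates to a sign condition comparing the linear interpolants of the two segments at the endpoints of the common $x$-interval.

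The main obstacle is the explicit evaluation of these integrals. Each factors cleanly (thanks to the independence between the $U$'s and the $V$'s, and between the two segments) into products of integrals of polynomials of order statistics on triangular simplices, which reduce to standard beta-function computations; however, the case-by-case bookkeeping is not entirely trivial. Carrying it out shows that the three contributions sum to $1/6$, which propagates back through the chain of reductions to yield the desired probability of exactly $1/4$.
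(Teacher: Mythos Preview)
Your reductions are clean and correct: the observation that the number of monochromatic crossings is $0$ or $1$, the linearity-of-expectation step giving the factor $3$, the reflection $y\mapsto 1-y$ giving the factor $2$, and the conditioning on both slopes being positive (probability $1/4$) are all valid, so the problem does reduce to showing that two independent positive-slope random chords of the unit square cross with probability $1/6$. The parametrization of a positive-slope chord by an independent pair of order statistics in each coordinate is also correct. From there the computation is a finite (if tedious) multiple integral; since the theorem is true, the answer $1/6$ is indeed what comes out.

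This is a genuinely different route from the paper. The paper never works directly in the continuous model: it discretizes to the grid $P_m=[m]^2$, fixes the isothetic bounding box $Q_{w,h}$ of the four points, and splits the count according to how many of the four points sit at corners of $Q_{w,h}$. The non-corner points then lie on the sides of $Q_{w,h}$ (in the dominant cases) and the monochromatic-crossing condition becomes an area count for the remaining free point, handled via Nosarzewska's lattice-point estimate. Summing over all bounding boxes and letting $m\to\infty$ gives the $1/4$. What the paper's decomposition buys is that each case is governed by the location of a \emph{single} free point inside an explicit triangle, so the integrals are one-dimensional after the bounding-box conditioning; it also parallels Valtr's method for the convex-position probability, making the connection to $q(R)=25/36$ transparent. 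Your approach, by contrast, avoids discretization and the bounding-box bookkeeping entirely, at the price of a genuinely four-dimensional integral at the end.

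One small caveat on your last step: the crossing condition does not literally ``factor into products'' of $x$-only and $y$-only integrals, since the sign condition at the endpoints of the common $x$-interval couples the two sets of coordinates (the heights $h_1(c_1),h_1(c_2)$ depend on both the $x$-interleaving ratios and on $b_1,b_2$). What independence buys you is clean \emph{iterated} integration---first over the $y$-order-statistics with the $x$-data fixed, yielding a polynomial in the interpolation ratios, then over the $x$-order-statistics---rather than a product decomposition. This is still entirely routine, just slightly more coupled than your phrasing suggests.
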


Let $\chi_{slope}$ be the edge coloring of $D$ in which an edge is colored red if it has positive
slope and blue, otherwise. Theorem~\ref{thm:main} implies that if the vertices of $D$ are chosen
uniformly at random from a square then
\[\EE[\rec (D,\chi_{slope})] ~=~ \frac{1}{4} \cdot \binom{n}{4} ~=~ \frac{18}{50}\cdot \EE[\rec (D)] ~=~ \left ( \frac{1}{2}-\frac{7}{50} \right)\EE[\rec (D)] .\] 
The constant $7/50$ is significantly larger than the constant $c$ of Aichholzer et al.~\cite{2colored} for the generic case.
Incidentally, this idea of assigning different colors to edges depending on the slope was used by Bla{\v z}ek and Koman~\cite{2page}, to obtain a (conjectured) crossing optimal drawing of the complete graph $K_n$ as follows. Place
$n$ points in a regular polygon; for every pair of vertices $u$ and $v$, if the line segment
$\overline{uv}$ has positive slope then draw this edge as diagonal of the polygon, otherwise draw this edge as a chord on the outside of the polygon.


\section{Preliminaries}
For any non-negative integer $n$, let $[n]=\{0,\dots, n\}$.

For non-negative integers $w$ and $h$, 
we use $Q_{w,h}$ to denote the integer grid $[w]\times [h]$, which contains $(w+1)(h+1)$ points.
The four \emph{sides} of $Q_{w,h}$ are the subsets $\{0\}\times [h]$ (\emph{left side}), 
$\{w\}\times [h]$ (\emph{right side}), $[w]\times \{ 0 \}$ (\emph{bottom side}), and $[w]\times \{ h \}$ (\emph{top side}).
The four \emph{corners} of $Q_{w,h}$ are the points that lie on two sides simultaneously: $\{ 0,w\}\times \{0,h\}$.
See Figure~\ref{fig:Q}.
For a set of points $S\subset Q_{w,h}$, we say that its bounding box is $Q_{w,h}$ if 
$S$ intersects each of the four sides of $Q_{w,h}$. (A corner intersects two sides.)

\begin{figure}\
 	\centering
 	\includegraphics[page=1,scale=1.2]{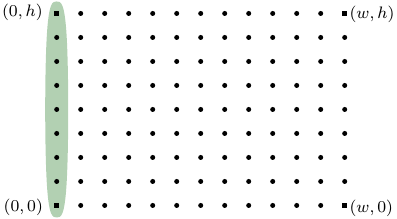}
 	\caption{The grid $Q_{w,h}$ for $w=12$ and $h=8$. Its corners are marked with boxes. Its left side
		is slightly shaded in green.}
	\label{fig:Q}
 \end{figure}
 
We define $P_m:= Q_{m,m}$ for each non-negative integer $m$.

We use $A=B\pm C$ as a shorthand for $B-C\le A\le B+C$.
In our setting, $C$ is given in $O(\cdot)$ notation.
Thus, writing $A(n)=B(n)\pm O(n)$ means that there is a constant $c>0$
such that $B(n)-cn \le A(n)\le B(n)+cn$ for all $n$.

To estimate the number of integer points inside a convex region of the plane,
there is a tight, classical bound given by Nosarzewska~\cite{Nosarzewska},  
which we simplify to the following rough statement:
\begin{theorem}
\label{thm:Nosarzewska}
	Let $K$ be a convex and compact set in the plane with area $A$ and perimeter $L$.
	Then $|\ZZ^2\cap K|= A\pm O(1+L)$.
\end{theorem}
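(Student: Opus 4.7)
The plan is to compare $|\ZZ^2 \cap K|$ and $A$ by matching each lattice point $p \in \ZZ^2$ with the unit square $C_p := p + [-\tfrac12, \tfrac12]^2$ centered at $p$. These squares tile the plane (up to measure-zero boundaries), so
\[
A \;=\; \sum_{p \in \ZZ^2} \area(C_p \cap K).
\]
I would classify each lattice point into one of three types: \emph{interior} ($C_p \subseteq K$, contributing $1$ to the sum), \emph{exterior} ($C_p \cap K = \emptyset$, contributing $0$), and \emph{boundary} (everything else, necessarily meeting $\partial K$ and contributing some value in $[0,1]$). Writing $N_{\mathrm{int}}$ and $N_{\partial}$ for the number of interior and boundary lattice points, respectively, the identity above gives $N_{\mathrm{int}} \le A \le N_{\mathrm{int}} + N_{\partial}$. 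Independently, every interior lattice point lies in $K$, and every lattice point of $K$ is either interior or boundary, so $N_{\mathrm{int}} \le |\ZZ^2 \cap K| \le N_{\mathrm{int}} + N_{\partial}$. Combining these two chains yields $\bigl|\, |\ZZ^2 \cap K| - A \,\bigr| \le N_{\partial}$.

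The remaining task is to show $N_{\partial} = O(1 + L)$. Every boundary square $C_p$ contains a point of $\partial K$, so each of its points lies within distance $\mathrm{diam}(C_p) = \sqrt{2}$ of $\partial K$. Thus the union of all boundary squares is contained in $U := \{x \in \RR^2 : d(x, \partial K) \le \sqrt 2\}$. Since these squares are pairwise interior-disjoint with area $1$, one has $N_{\partial} \le \area(U)$. For a convex body $K$, Steiner's formula gives $\area(K \oplus B(0,r)) = A + Lr + \pi r^2$, and when the inner parallel body $K \ominus B(0,r)$ is nonempty one also has $\area(K \ominus B(0,r)) \ge A - Lr$; subtracting and taking $r = \sqrt 2$ yields $\area(U) \le 2Lr + \pi r^2$. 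In the degenerate case $K \ominus B(0,r) = \emptyset$, the body $K$ fits in a strip of width $2r$, so using $\mathrm{diam}(K) \le L/2$ one has $A \le 2r \cdot \mathrm{diam}(K) \le rL$, and then $\area(U) \le \area(K \oplus B(0,r)) \le 2Lr + \pi r^2$ still holds. This gives $N_{\partial} \le 2\sqrt 2\, L + 2\pi = O(1+L)$, completing the proof.

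The main obstacle is invoking (or briefly reproving) the Steiner-type area estimate for the $\sqrt 2$-neighborhood of $\partial K$, together with the degenerate case of a very thin $K$ where no inner parallel body exists; both are classical facts about convex bodies but need to be stated carefully, since the theorem is used without assuming $K$ has nonempty interior. Once that estimate is in hand, the result follows from the clean tiling comparison via the squares $C_p$.
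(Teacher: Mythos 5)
Your argument is correct, but note that the paper does not prove this statement at all: it is quoted as a classical result of Nosarzewska (whose sharp form bounds $|\ZZ^2\cap K|-A$ between $-\tfrac12 L$ and $1+\tfrac12 L$), deliberately weakened to the rough $O(1+L)$ form. What you have supplied is a self-contained elementary substitute. The tiling comparison via the unit squares $C_p=p+[-\tfrac12,\tfrac12]^2$ is clean: the two sandwich inequalities $N_{\mathrm{int}}\le A\le N_{\mathrm{int}}+N_{\partial}$ and $N_{\mathrm{int}}\le|\ZZ^2\cap K|\le N_{\mathrm{int}}+N_{\partial}$ are both right (a non-interior, non-exterior square is connected and meets both $K$ and its complement, hence meets $\partial K$), and packing the interior-disjoint boundary squares into the $\sqrt2$-neighborhood of $\partial K$ correctly reduces everything to a Steiner-type area bound. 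Your handling of the degenerate case where the inner parallel body is empty (via $\operatorname{diam}(K)\le L/2$ and $A\le 2r\operatorname{diam}(K)$) is the one place a careless write-up would leave a hole, and you close it. The trade-off relative to the cited result is only in the constants: you get $N_{\partial}\le 2\sqrt2\,L+2\pi$ rather than Nosarzewska's tight $1+\tfrac12 L$, which is immaterial here since the paper only ever uses the bound inside $\pm O(w+h)$ error terms. If one wanted to avoid even the inner Steiner inequality $\area(K\ominus rB)\ge A-Lr$, one could instead cover $\partial K$ by $O(1+L)$ balls of radius $\sqrt2$ and count squares per ball, but your route is perfectly standard and correct as written.
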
 

We will also use the following estimates for sums, whose bounds follow from the closed
form or from approximating by integrals. 
\begin{lemma}
	For each positive integer $n$ we have
	\begin{align*}
		&\sum_{i=0}^n i~=~ n^2/2 \pm O(n) , ~~~~~
		\sum_{i=0}^n i^2~=~ n^3/3 \pm O(n^2), ~~~~~
		\sum_{i=0}^n i^3~=~ n^4/4 \pm O(n^3), \\
		&\sum_{i=0}^n i(n-i)~=~ n^3/6 \pm O(n^2), ~~~~~
		\sum_{i=0}^n i^2(n-i)~=~ \sum_{i=0}^n i(n-i)^2~=~ n^4/12 \pm O(n^3).
	\end{align*}

\end{lemma}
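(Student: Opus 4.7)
The plan is to handle each of the six asymptotic identities via elementary closed-form manipulations. Three of them are textbook power-sum formulas, and the remaining three reduce to these by linearity of summation and a symmetry argument.

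First, I would invoke the standard closed forms
\[\sum_{i=0}^n i = \frac{n(n+1)}{2}, \qquad \sum_{i=0}^n i^2 = \frac{n(n+1)(2n+1)}{6}, \qquad \sum_{i=0}^n i^3 = \left(\frac{n(n+1)}{2}\right)^2,\]
each verifiable by a one-line induction on $n$. Expanding each right-hand side as a polynomial in $n$, one sees that the leading term is exactly $n^{k+1}/(k+1)$ and that all remaining terms together contribute only $O(n^k)$; this matches the claimed error bounds and settles the first three estimates.

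For the three mixed sums I would then apply linearity of summation together with the estimates just proved. Specifically,
\[\sum_{i=0}^n i(n-i) ~=~ n \sum_{i=0}^n i - \sum_{i=0}^n i^2 ~=~ \frac{n^3}{2} - \frac{n^3}{3} \pm O(n^2) ~=~ \frac{n^3}{6} \pm O(n^2),\]
and analogously
\[\sum_{i=0}^n i^2(n-i) ~=~ n \sum_{i=0}^n i^2 - \sum_{i=0}^n i^3 ~=~ \frac{n^4}{3} - \frac{n^4}{4} \pm O(n^3) ~=~ \frac{n^4}{12} \pm O(n^3).\]
The remaining equality $\sum_{i=0}^n i^2(n-i) = \sum_{i=0}^n i(n-i)^2$ is obtained instantly from the involution $i \mapsto n-i$ on the index set $\{0, 1, \dots, n\}$: substituting $j := n-i$ reindexes one sum into the other term-by-term.

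The only thing requiring genuine attention is bookkeeping of the $O(\cdot)$ error terms. Multiplying an estimate of the form $n^k \pm O(n^{k-1})$ by $n$ must yield $n^{k+1} \pm O(n^k)$, with the implicit constant depending only on the original one; and adding or subtracting two quantities of the form $A \pm O(n^k)$ must keep the error in $\pm O(n^k)$. Both observations are immediate from the definition of $O(\cdot)$, so there is no deeper obstacle; the entire lemma is a routine verification. As an alternative route, each sum could also be compared to the corresponding Riemann integral $\int_0^n f(x)\,dx$, since every integrand here is piecewise monotone with maximum value of the claimed order on $[0,n]$, making the sum-to-integral error absorbable into the stated bound.
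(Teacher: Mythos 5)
Your proof is correct and follows exactly the route the paper indicates (it gives no written proof, only the remark that the bounds ``follow from the closed form or from approximating by integrals''): you use the standard closed-form power sums, reduce the mixed sums by linearity, and note the reindexing $i\mapsto n-i$ for the last equality. Nothing further is needed.
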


\section{Proof of Theorem~\ref{thm:main}}

Valtr~\cite{pavel} proved that the probability that $n$ points chosen independently and uniformly at random
from a parallelogram are in convex position is equal to $\left( \binom{2n-2}{n-1}/n! \right )^2.$ 
Setting $n=4$ in this formula, one recovers the value $q(R)=25/36$ for a unit square $R$.
A very high level overview of his approach is as follows. 
First consider the integer grid $P_m$ and assume that the $n$ points are chosen from this grid. 
Note that every set of $n$ points in convex position has a unique isothetic bounding box. Count the number of convex polygons of $n$ vertices having a fixed integer grid $Q_{w,h}$ as bounding box. 
Then, sum this number over all possible copies of all possible integer grids $Q_{w,h}$ that fit inside $P_m$. 
This gives the number of convex polygons with $n$ vertices in the integer grid $P_m$. 
Comparing this to the number of $n$-subsets of the integer grid $P_m$ and letting $m$ tend to infinity, one
gets the result. 

We follow a similar paradigm. However, in contrast to the work of Valtr, we use estimates to approximate the sums
that appear in the analysis. The error introduced by approximating the sums is negligible and vanishes
when we let $m$ tend to infinity. 
Valtr uses a clever technique to count the number of convex polygons on $n$ vertices when $Q_{w,h}$ is the bounding box. We introduce a different way of counting these polygons when $n=4$.  This new counting allows us to distinguish convex polygons depending on whether their diagonals are monochromatic or bichromatic, for which Valtr's technique does not seem suitable.

For any positive integers $w$ and $h$,  
let $A_{w,h}$ be the number of sets $S \in \binom{Q_{w,h}}{4}$ satisfying the following:
\begin{itemize}
 \item the bounding box of $S$ is precisely $Q_{w,h}$; 
 \item $S$ is in convex position; and
 \item the diagonals of the convex quadrilateral defined by $S$ are of the same color.
\end{itemize}

We are interested in estimating $A_{w,h}$. 
We decompose the relevant sets depending on the number of points placed at the corners of the bounding box. 
More precisely, for each $i=0,\dots,4$, let $A_{w,h}^{(i)}$ be the number of sets 
$S \in \binom{Q_{w,h}}{4}$ satisfying the following:
\begin{itemize}
 \item the bounding box of $S$ is precisely $Q_{w,h}$; 
 \item $S$ is in convex position;
 \item the diagonals of the convex quadrilateral defined by $S$ are of the same color; and
 \item exactly $i$ of the points of $S$ are corners of $Q_{w,h}$.
\end{itemize}
Obviously,
\[
	A_{w,h} ~=~ \sum_{i=0}^4 A_{w,h}^{(i)}.
\]
We compute estimates for each of the values $A_{w,h}^{(i)}$ separately.  
In some of our estimates we neglect the case where the diagonals are horizontal or vertical, as they contribute
a negligible part. 

We will use $p_1,\dots,p_4$ to denote the points in the sets $S \in \binom{Q_{w,h}}{4}$ we consider. 
For each point $p_i\in S$, we use $(x_i,y_i)$ for its coordinates.

\begin{itemize}

\item {$\bm{A_{w,h}^{(4)}}$ and $\bm{A_{w,h}^{(3)}}$}\\
We have $A_{w,h}^{(4)}=1$ and $A_{w,h}^{(3)}=O(wh)$;
they will be negligible in the final computation.

\item {$\bm{A_{w,h}^{(0)}}$}\\
In this case, the four points of $S$ are on the sides of $Q_{w,h}$ and none of them
lie in a corner. Without loss of generality,
we assume that $p_1, p_2, p_3, p_4$ lie on the top, bottom, left and right sides of $Q_{w,h}$, 
respectively. 
See Figure~\ref{fig:a0}.
Thus, the diagonals that cross are $\overline{p_1p_2}$ and $\overline{p_3p_4}$. 
There are $(w-1)^2$ choices for $\overline{p_1p_2}$ and $(h-1)^2$ choices for $\overline{p_3p_4}$,
as none of the points can be a corner. Exactly\footnote{Here we neglect the case of 
of $\overline{p_1p_2}$ being vertical or $\overline{p_3p_4}$ being horizontal, which is covered by the term
$\pm O(w^2h+wh^2)$ in the computation below.} half of the choices for  
$\overline{p_1p_2}$ are red, exactly half of the choices of $\overline{p_3p_4}$ are red, and those choices are independent. Thus,
 \[
	A_{w,h}^{(0)} ~=~ \frac{(w-1)^2 (h-1)^2}{2} \pm O(w^2h+wh^2) ~=~ \frac{w^2 h^2}{2} \pm O(w^2h+wh^2).
\]
\begin{figure}\
 	\centering
 	\includegraphics[page=2,scale=1.1]{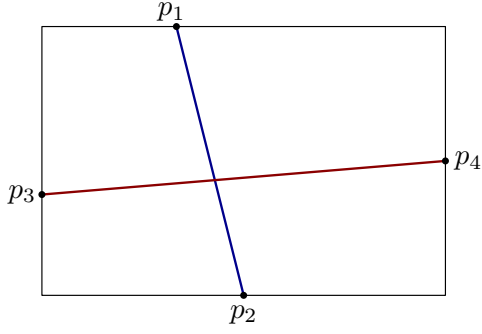}
 	\caption{The case $A_{w,h}^{(0)}$.}
	\label{fig:a0}
 \end{figure}
  
\item {$\bm{A_{w,h}^{(1)}}$}\\
Without loss of generality assume that $p_1$ is the point in a corner of $Q_{w,h}$. 
Suppose that $p_1$ is in the bottom-left corner. The other three cases are analogous. 
To have bounding box $Q_{w,h}$, one point of $S$, say $p_2$, is in the right side of $Q_{w,h}$, 
and another point of $S$, say $p_3$, is in the top side of $Q_{w,h}$. 
Note that $\overline{p_1p_2}$, $\overline{p_1p_3}$ are red, while $\overline{p_2p_3}$ is blue.
See Figure~\ref{fig:a1}.
For $S$ to define a pair of crossing edges of the same color, the remaining point of $S$, $p_4$, 
must lie in $K$ or $K'$, where:
\begin{itemize}
	\item $K=K(p_2,p_3)$ is the region above the line supporting $\overline{p_1p_3}$, below the horizontal line through $p_2$,
		and to the right of the vertical line through $p_1$;
	\item $K'=K'(p_2,p_3)$ is the region below the line supporting $\overline{p_1p_2}$, to the left of the vertical line
		through $p_3$, and above the horizontal line through $p_1$. 
\end{itemize}
Note that $K$ and $K'$ are interior disjoint triangles.
By considering the four possible corners for the point $p_1$ and the possible locations
of $p_2$ and $p_3$, we have 
\[
	A_{w,h}^{(1)} ~=~ 4 \sum_{y_2=1}^{h-1} \sum_{x_3=1}^{w-1} \Bigl( |Q_{w,h}\cap K|+|Q_{w,h}\cap K'| \pm O(1) \Bigr).\]
Since the regions $K$ and $K'$ are convex and have perimeter $O(w+h)$, we can use Theorem~\ref{thm:Nosarzewska}
to obtain
\begin{align*}
	A_{w,h}^{(1)} ~&=~ 4 \sum_{y_2=1}^{h-1} \sum_{x_3=1}^{w-1} \Bigl( \area(K) + \area(K') \pm O(w+h) \Bigr)\\
				  ~&=~ 4 \sum_{y_2=1}^{h-1} \sum_{x_3=1}^{w-1} \left( \frac{1}{2}\cdot y_2 \cdot \frac{y_2}{h}x_3 + \frac{1}{2}\cdot x_3\frac{x_3}{w}y_2 \pm O(w+h) \right)\\
				  ~&=~ 4 \sum_{y_2=1}^{h-1} \left[\Bigl( \frac{w^2}{2}\pm O(w)\Bigr) \frac{y_2^2}{2h} + \Bigl( \frac{w^3}{3}\pm O(w^2)\Bigr) \frac{y_2}{2w} \pm O(w^2+wh)\right]\\
				  ~&=~ 4 \left[\Bigl( \frac{w^2}{2}\pm O(w)\Bigr) \Bigl( \frac{h^3}{3}\pm O(h^2)\Bigr) \frac{1}{2h} + \Bigl( \frac{w^3}{3}\pm O(w^2)\Bigr) \Bigl( \frac{h^2}{2}\pm O(h)\Bigr) \frac{1}{2w} \pm O(w^2h+wh^2)\right]\\
				  ~&=~ \Bigl( w^2\pm O(w)\Bigr) \Bigl( \frac{h^2}{3}\pm O(h)\Bigr) + \Bigl( \frac{w^2}{3}\pm O(w)\Bigr) \Bigl( h^2 \pm O(h)\Bigr)  \pm O(w^2h+wh^2)\\
				  ~&=~ \frac{2}{3} w^2 h^2 \pm O(w^2h+wh^2).
\end{align*}

\begin{figure}\
 	\centering
 	\includegraphics[page=3,scale=1.1]{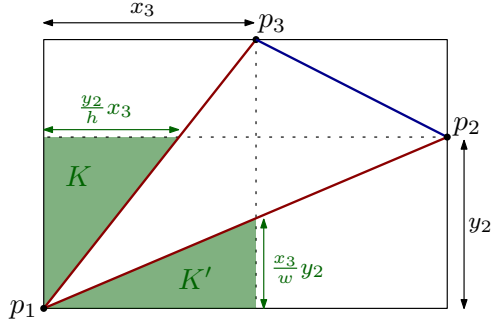}
 	\caption{The case $A_{w,h}^{(1)}$ with a point at the bottom-left corner.}
	\label{fig:a1}
\end{figure}

\item {$\bm{A_{w,h}^{(2)}}$}\\
If two of the points of $S$ lie on two corners on the same side of $Q_{w,h}$, then at least one other point of $S$
has to lie on the boundary of $Q_{w,h}$, and there are $O(w+h)$ choices for that other point.
The fourth point of $S$ can then lie anywhere in $Q_{w,h}$. In total there are $O(w^2h+wh^2)$ such
sets with two points of $S$ on the two corners of a single side of $Q_{w,h}$.

It remains to count the number of sets $S$ with two points on opposite corners of $Q_{w,h}$. 
Suppose that one point of $S$, say $p_1$, is in the bottom-left corner and another point, say $p_2$,
is in the top-right corner. 
The case when the points are in the other pair of opposing corners is analogous.
Note that the segment $\overline{p_1p_2}$ is red. See Figure~\ref{fig:a2}.

\begin{figure}\
 	\centering
 	\includegraphics[page=4,scale=1.1]{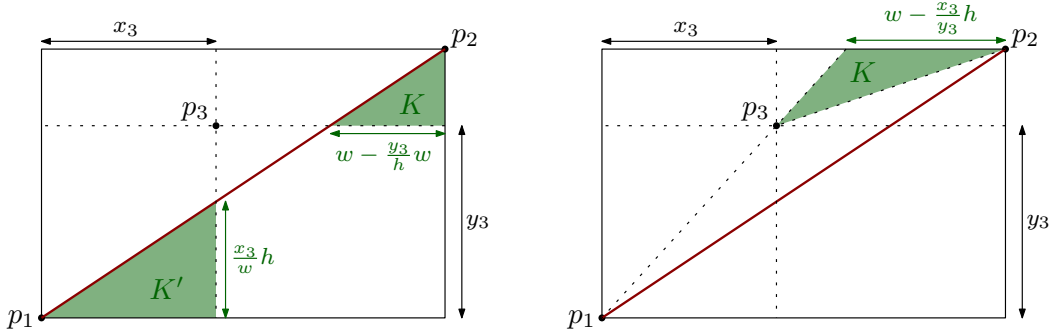}
 	\caption{The case $A_{w,h}^{(2)}$ with points at the bottom-left and top-right corner. 
		The left figure is to analyze $C_{w,h}^{(2)}$ and the right figure for $D_{w,h}^{(2)}$.}
	\label{fig:a2}
\end{figure}

Let $p_3$ and $p_4$ be the remaining points of $S$. Let $\ell$ be the line supporting the diagonal $\overline{p_1p_2}$.
We count separately the sets $S$ with $p_3$ and $p_4$ on the same side of the line $\ell$ and those with those points on opposite sides of $\ell$. More precisely, we define:
\begin{itemize}
	\item Let $C_{w,h}^{(2)}$ be the number of sets $\{p_1,p_2,p_3, p_4\}\in \binom{Q_{w,h}}{4}$ 
		contributing to $A_{w,h}^{(2)}$	such that $p_1$ is the bottom-left corner, $p_2$ is the top-right corner, 
		and $p_3,p_4$ are on opposite sides of the line supporting $\overline{p_1p_2}$. 
	\item Let $D_{w,h}^{(2)}$ be the number of sets $\{p_1,p_2,p_3, p_4\}\in \binom{Q_{w,h}}{4}$ 
		contributing to $A_{w,h}^{(2)}$ such that $p_1$ is the bottom-left corner, $p_2$ is the top-right corner, 
		and $p_3,p_4$ are on the same side of the line supporting $\overline{p_1p_2}$. 
\end{itemize}
We then have
\[
	A_{w,h}^{(2)} ~=~ 2\cdot \Bigl( C_{w,h}^{(2)}+D_{w,h}^{(2)} \Bigr),
\]
where the factor $2$ comes from choosing $p_1,p_2$ as the endpoints of the other diagonal.

Let us estimate $C_{w,h}^{(2)}$. 
Without loss of generality, let us denote by $p_3$ the point above $\ell$, and thus $p_4$ is below $\ell$.
See Figure~\ref{fig:a2}, left.
For each choice of $p_3$ above $\ell$, if the segment $\overline{p_3p_4}$ is to be red, 
the point $p_4$ must lie in $K$ or $K'$, where:
\begin{itemize}
	\item $K=K(p_3)$ is the region below $\ell$, above the horizontal line through $p_3$, 
		and to the left of the vertical line through $p_2$;
	\item $K'=K'(p_3)$ is the region below $\ell$, to the left of the vertical line through $p_3$, 
		and above the horizontal line through $p_1$. 
\end{itemize}
Note that $K$ and $K'$ are interior disjoint triangles.
Thus, 
\begin{align*}
C_{w,h}^{(2)} & = \sum_{p_3 \text{ above } \ell}  \Bigl( |Q_{w,h}\cap K|+|Q_{w,h}\cap K'| \pm O(w+h) \Bigr) \\
              & = \sum_{y_3=1}^h ~~\sum_{x_3=0}^{\lfloor y_3 w/h\rfloor} \Bigl( \area(K)+\area(K') \pm O(w+h) \Bigr)\\
              & = \sum_{y_3=1}^h ~~\sum_{x_3=0}^{\lfloor y_3 w/h\rfloor} \left( \frac{1}{2}\cdot w(1- \frac{y_3}{h})\cdot (h-y_3) +\frac{1}{2} \cdot x_3 \cdot \frac{x_3}{w}h  \pm O(w+h) \right)\\
              & = \sum_{y_3=1}^h ~~\sum_{x_3=0}^{\lfloor y_3 w/h\rfloor} \left( \frac{w}{2h} \cdot (h-y_3)^2 +\frac{h}{2w} \cdot x_3^2  \pm O(w+h) \right)\\
              & = \sum_{y_3=1}^h \left[  \frac{w}{2h} \cdot (h-y_3)^2 \cdot \Bigl( y_3 w/h \pm O(1)\Bigr) +\frac{h}{2w} \cdot \Bigl(\frac{(y_3 w/h)^3}{3} \pm O((y_3 w/h)^2)\Bigr)   \pm O(w^2+wh) \right] \\
              & = \sum_{y_3=1}^h \left[  \frac{w^2}{2h^2} \cdot (h-y_3)^2 y_3 +\frac{w^2}{6h^2} \cdot y_3^3 \pm O(w^2+wh) \right] \\
              & = \frac{w^2}{2h^2} \cdot \Bigl( \frac{h^4}{12} \pm O(h^3)\Bigr) +\frac{w^2}{6h^2} \cdot \Bigl( \frac{h^4}{4} \pm O(h^3)\Bigr) \pm O(w^2h+wh^2)  \\
			  & = \frac{1}{12} w^2h^2 \pm O(w^2h+wh^2).
\end{align*}

We continue now estimating $D_{w,h}^{(2)}$. 
Let us consider the case when both points $p_3$ and $p_4$ are above $\ell$; the other case is analogous.
Because of symmetry, we can denote by $p_3$ the point with smallest $x$-coordinate.
See Figure~\ref{fig:a2}, right.
In this case, for each choice of $p_3$, the point $p_4$ must lie in $K=K(p_3)$:
the region below the line supporting $\overline{p_1p_3}$, above the line supporting $\overline{p_2p_3}$,
and below the horizontal line through $p_2$.
This region is a triangle and, using that the slope of $\overline{p_1p_3}$ is $y_3/x_3$,
we obtain that $K$ has base (on $y=h$) equal to $w-\frac{x_3}{y_3}h$ and height equal to $h-y_3$.
See Figure~\ref{fig:a2}, right.
Therefore
\begin{align*}
D_{w,h}^{(2)} & = 2\sum_{p_3 \text{ above } \ell}  \Bigl( |Q_{w,h}\cap K| \pm O(w+h) \Bigr) \\
              & = 2 \sum_{y_3=1}^h ~~\sum_{x_3=0}^{\lfloor y_3 w/h\rfloor} \Bigl( \area(K) \pm O(w+h) \Bigr)\\
              & = 2 \sum_{y_3=1}^h ~~\sum_{x_3=0}^{\lfloor y_3 w/h\rfloor} \left( \frac{1}{2}\cdot (w- \frac{x_3}{y_3}h) \cdot (h-y_3) \pm O(w+h) \right)\\
              & = \sum_{y_3=1}^h ~~\sum_{x_3=0}^{\lfloor y_3 w/h\rfloor} \left( w(h- y_3) - x_3 (\frac{h^2}{y_3} - h) \pm O(w+h) \right)\\
              & = \sum_{y_3=1}^h \left( w(h- y_3)\Bigl( y_3 w/h \pm O(1)\Bigr) - \Bigl( \frac{(y_3 w/h)^2}{2} \pm O(y_3 w/h)\Bigr) (\frac{h^2}{y_3} - h) \pm O(w^2+wh) \right)\\
              & = \sum_{y_3=1}^h \left[ \Bigl( \frac{w^2}{h}(h-y_3)y_3 \Bigr) - \Bigl( \frac{w^2}{2} y_3 \Bigr)
					+ \Bigl( \frac{w^2}{2h} y_3^2 \Bigr) \pm O(w^2+wh)\right]\\
              & = \Bigl( \frac{w^2}{h} \bigl( \frac{h^3}{6}\pm O(h^2) \bigr) \Bigr) - \Bigl( \frac{w^2}{2} \Bigl( \frac{h^2}{2}\pm O(h) \Bigr) \Bigr)
					+ \Bigl( \frac{w^2}{2h} \Bigl( \frac{h^3}{3}\pm O(h^2) \Bigr) \Bigr) \pm O(w^2h+wh^2)\\
			  & = \frac{1}{12}w^2h^2\pm O(w^2h+wh^2).
\end{align*}

Using the computed values we conclude that
\[
	A_{w,h}^{(2)} ~=~ 2\cdot \left( \frac{1}{12}w^2h^2\pm O(w^2h+wh^2) + \frac{1}{12}w^2h^2\pm O(w^2h+wh^2) \right)
				  ~=~ \frac{1}{3}w^2h^2 \pm O(w^2h+wh^2).
\]
\end{itemize}

This finishes the estimates of each single value $A_{w,h}^{(i)}$.
Adding them we have that
\[
	A_{w,h} ~=~ w^2 h^2 \left( \frac{1}{2}+ \frac{2}{3} + \frac{1}{3}\right) \pm O(w^2h+wh^2) ~=~ \frac{3}{2}w^2 h^2 \pm O(w^2h+wh^2).
\]
We summarize our findings.
\begin{lemma}
	There are $\frac{3}{2}w^2 h^2 \pm O(w^2h+wh^2)$ sets $S\in \binom{Q_{w,h}}{4}$ such that:
	the points of $S$ are in convex position, the bounding box of $S$ is $Q_{w,h}$,
	and both diagonals of the convex quadrilateral defined by $S$ have positive slope or both diagonals have negative slope.
\end{lemma}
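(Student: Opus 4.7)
The plan is to partition the sets counted by $A_{w,h}$ according to how many of their points sit at the four corners of the bounding box. Writing $A_{w,h}=\sum_{i=0}^{4}A_{w,h}^{(i)}$, where $A_{w,h}^{(i)}$ restricts to sets containing exactly $i$ corners, the cases $i=3,4$ contribute only $O(wh)$ and $1$ respectively, both absorbed in the stated $O(w^2h+wh^2)$ error. The real work is in the cases $i=0,1,2$, which I would carry out in increasing order of complexity, obtaining leading contributions $\tfrac12 w^2h^2$, $\tfrac23 w^2h^2$, and $\tfrac13 w^2h^2$ that sum to the claimed $\tfrac32 w^2h^2$.

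For $i=0$, the bounding-box condition forces exactly one point of $S$ onto each of the four open sides of $Q_{w,h}$. The quadrilateral is automatically convex, and its diagonals go top-to-bottom and left-to-right; the slope signs of these two diagonals are independent once we set aside the $O(w^2h+wh^2)$ configurations where one diagonal is horizontal or vertical. Hence exactly half of the $(w-1)^2(h-1)^2$ configurations are monochromatic, giving $A_{w,h}^{(0)}=\tfrac12 w^2h^2\pm O(w^2h+wh^2)$.

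For $i=1$, by symmetry I would fix the corner point at $(0,0)$ and multiply by $4$; the bounding-box condition then forces one point $p_2$ on the open right side and one point $p_3$ on the open top side. These three points determine two red edges and one blue edge, and the fourth point $p_4$ yields a monochromatic-diagonals convex quadrilateral iff it lies in one of two explicit triangles cut out by the line through $p_1$ and by horizontal/vertical lines through $p_2,p_3$. Replacing the lattice counts by the triangle areas with error $O(w+h)$ via Theorem~\ref{thm:Nosarzewska} and summing over $p_2,p_3$ using the polynomial-sum estimates of the preliminaries yields the claimed $\tfrac23 w^2h^2$. The $i=2$ case splits into two-corners-on-a-common-side, which is immediately $O(w^2h+wh^2)$ because one of the remaining points is forced to the boundary, and the genuine case of opposite corners, say $p_1=(0,0)$ and $p_2=(w,h)$. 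For the latter I would split further according to whether the remaining two points lie on opposite sides of $\overline{p_1p_2}$ (so $\overline{p_1p_2}$ is a diagonal of the quadrilateral) or on the same side (so $\overline{p_3p_4}$ is a diagonal and must be red). In each subcase, fixing one of $p_3,p_4$ identifies the allowable triangle for the other, and Nosarzewska plus the sum estimates give $\tfrac{1}{12}w^2h^2$ per subcase.

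The hardest part will be controlling error propagation in the $i=2$ opposite-corners same-side subcase: the inner $x_3$-range is $\lfloor y_3w/h\rfloor$ and the triangle base $w-\tfrac{x_3}{y_3}h$ contains a $1/y_3$ factor, so one must verify line-by-line that the cumulative error from each Nosarzewska step and from each approximation of a partial sum stays within $O(w^2h+wh^2)$ rather than being amplified by the nested sums. Once that bookkeeping is in place, assembling the three leading contributions gives $\bigl(\tfrac12+\tfrac23+\tfrac13\bigr)w^2h^2=\tfrac32 w^2h^2$, proving the lemma.
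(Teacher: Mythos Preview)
Your proposal is correct and follows essentially the same approach as the paper: the same corner-count decomposition $A_{w,h}=\sum_i A_{w,h}^{(i)}$, the same subcase split for $i=2$ (same-side corners negligible; opposite corners further split by whether $p_3,p_4$ lie on the same or opposite sides of $\ell$), the same use of Theorem~\ref{thm:Nosarzewska}, and the same leading terms $\tfrac12+\tfrac23+\tfrac13$. One small slip: in the same-side subcase $\overline{p_3p_4}$ is an \emph{edge} of the quadrilateral, not a diagonal (the diagonals are $\overline{p_1p_4}$ and $\overline{p_2p_3}$, up to labeling), but this does not affect the region $K(p_3)$ or the computation you outline.
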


We have to consider now all copies of $Q_{w,h}$ that are contained in $P_m$, for all possible values $w$ and $h$.
This is the main object of interest in our approach.

\begin{lemma}
	There are $\frac{m^8}{96} \pm O(m^7)$ sets $S\in \binom{P_m}{4}$ such that:
	the points of $S$ are in convex position, 
	and both diagonals of the convex quadrilateral defined by $S$ have positive slope or both diagonals have negative slope.
\end{lemma}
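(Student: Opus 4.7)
The plan is to leverage the preceding lemma (which counts quadruples with a \emph{fixed} bounding box $Q_{w,h}$) and sum over all possible bounding boxes that fit inside $P_m$. Every set $S\in\binom{P_m}{4}$ in convex position has a unique axis-aligned bounding box, which is a translate of $Q_{w,h}$ for some integers $w,h\ge 1$; and the number of translates of $Q_{w,h}$ contained in $P_m$ is exactly $(m-w+1)(m-h+1)$. So I would partition the quadruples of interest according to the dimensions of their bounding box and write the total count as
\[
N_m ~=~ \sum_{w=1}^{m}\sum_{h=1}^{m} (m-w+1)(m-h+1) \cdot \left(\tfrac{3}{2}w^2 h^2 \pm O(w^2 h + w h^2)\right).
\]

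For the main term I would expand $(m-w+1)(m-h+1)=(m-w)(m-h)\pm O(m)$ and invoke Fubini to split the double sum as a product of two univariate sums of the form $\sum_{i=1}^{m} i^2(m-i)$. By the preliminary estimate $\sum_{i=0}^{n} i^2(n-i) = n^4/12 \pm O(n^3)$, each factor equals $m^4/12 \pm O(m^3)$, so the main term evaluates to
\[
\tfrac{3}{2}\left(\tfrac{m^4}{12}\pm O(m^3)\right)^2 ~=~ \tfrac{3}{2}\cdot\tfrac{m^8}{144}\pm O(m^7) ~=~ \tfrac{m^8}{96}\pm O(m^7).
\]
The $\pm O(m)$ correction absorbed into $(m-w+1)(m-h+1)$ contributes at most $O(m)\cdot\sum_w w^2\cdot\sum_h h^2 = O(m)\cdot O(m^3)\cdot O(m^3)=O(m^7)$, which is within the allowed error.

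For the error term, I would use the crude bounds $(m-w+1)(m-h+1)=O(m^2)$ and
\[
\sum_{w=1}^{m}\sum_{h=1}^{m}(w^2 h + w h^2) ~=~ 2\Bigl(\sum_{w=1}^{m} w^2\Bigr)\Bigl(\sum_{h=1}^{m} h\Bigr) ~=~ O(m^3)\cdot O(m^2) ~=~ O(m^5),
\]
so this contribution is bounded by $O(m^2)\cdot O(m^5)=O(m^7)$ and is likewise absorbed.

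I do not expect any real obstacle: once the previous lemma is in hand, the present lemma is a routine bookkeeping consolidation, relying on the fact that bounding boxes are unique and that sums separate. The only care required is to track error propagation, and specifically to ensure that the factor $O(m^2)$ coming from $(m-w+1)(m-h+1)$, when multiplied into each of the $O(w^2h+wh^2)$ per-box errors, keeps the global error at $O(m^7)$ rather than allowing it to creep up to $O(m^8)$.
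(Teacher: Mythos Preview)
Your proposal is correct and follows essentially the same route as the paper: partition by bounding box, multiply the per-box count $A_{w,h}=\tfrac{3}{2}w^2h^2\pm O(w^2h+wh^2)$ by the number of translates of $Q_{w,h}$ inside $P_m$, separate the double sum into a product, and apply $\sum i^2(m-i)=m^4/12\pm O(m^3)$. Your count $(m-w+1)(m-h+1)$ of translates is in fact the precise value (the paper writes $(m-w)(m-h)$, which is off by lower-order terms), and your error bookkeeping is slightly more explicit than the paper's, which simply replaces $O(w^2h+wh^2)$ by $O(m^3)$ before summing; both lead to the same $O(m^7)$ bound.
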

\begin{proof}
	Note that there are $(m-w)\cdot (m-h)$ copies of $Q_{w,h}$ in $P_m$ because 
	the position of a corner of $Q_{w,h}$ inside $P_m$ determines the whole copy. 
	Therefore, using the loose bound $O(w^2h+wh^2)=O(m^3)$ in
	the estimate for $A_{w,h}$, we have 
	\begin{align*}
		\sum_{Q_{w,h}\text{ inside }P_m} ~~ A_{w,h} ~&=~ \sum_{w=1}^m \sum_{h=1}^m \left[ \Bigl( (m-w)\cdot (m-h) \Bigr) \cdot \Bigl(\frac{3}{2}w^2 h^2 \pm O(m^3)\Bigr) \right]\\
			~&=~ \frac{3}{2} \cdot \left( \sum_{w=1}^m (m-w)w^2\right) \cdot \left( \sum_{h=1}^m (m-h)h^2\right) \pm O(m^7)\\
			~&=~ \frac{3}{2} \cdot \frac{m^4}{12} \cdot \frac{m^4}{12} \pm O(m^7)\\
			~&=~ \frac{m^8}{96}  \pm O(m^7). \qedhere
	\end{align*}
\end{proof}

We can now estimate the portion of $\binom{P_m}{4}$ that gives convex sets with diagonals whose slope
have the same sign:
\[
	\frac{m^8/96 \pm O(m^7)}{\binom{(m+1)^2}{4}} ~=~ \frac{m^8/96 \pm O(m^7)}{m^8/24 \pm O(m^7)} ~=~ \frac{1}{4}\pm O(1/m)
	~\xrightarrow{m\to \infty}~ \frac{1}{4}.
\]

We can connect this last probability to the contionous model of selecting points uniformly at random 
in the unit square as follows.
Scale and translate the point set $P_m$ so that the points in $P_m$ become the center points of $(m+1)^2$ equal squares
that partition the unit square $[0,1]\times [0,1]$. Associate to every  point $q$ 
of the square $[0,1]\times [0,1]$ one of the closest points of $P_m$, denoted by $q^*$.
Now the event 
\begin{quote}
	$\mathcal{E}$: 4 uniformly and independently selected random points $q_1,q_2,q_3,q_4$ in $[0,1]\times [0,1]$
	are in convex position and both diagonals of the convex quadrilateral they define have the same color
\end{quote}
is closely related to the event  
\begin{quote}
	$\mathcal{E}'_m$: 4 uniformly and independently selected random points $q^*_1,q^*_2,q^*_3,q^*_4$ in $P_m$
	are in convex position and both diagonals of the convex quadrilateral they define have the same color.
\end{quote}
Note that in this last event the points $q^*_i$ are selected uniformly and independently from $P_m$, 
but with repetition. The difference between the probabilities of these two events, $\mathcal{E}$ and $\mathcal{E}'_m$,
tends to $0$ as $m$ increases. That is,
$\lim_{m\to \infty} |\Pr[\mathcal{E}] - \Pr[\mathcal{E}'_m]|=0$. 
Also, the difference between the probabilities of $\mathcal{E}'_m$ and
\begin{quote}
	$\mathcal{E}''_m$: 4 uniformly distinct points $\{ q^*_1,q^*_2,q^*_3,q^*_4\} \in \binom{P_m}{4}$
	are in convex position and both diagonals of the convex quadrilateral they define have the same color
\end{quote}
tends to $0$ as $m$ increases. 
Therefore 
\[
 \Pr[\mathcal{E}] ~=~ \lim_{m\to \infty} \Pr[\mathcal{E}'_m] ~=~ \lim_{m\to \infty} \Pr[\mathcal{E}''_m] ~=~ 
 \lim_{m\to \infty} \frac{1}{4}\pm O(1/m) = \frac{1}{4}.
\] 
This finishes the proof of Theorem~\ref{thm:main}.

\section{Conclusions}
We could consider a different coloring of the edges that uses a
slope criterion. 
To be precise, for any interval $I\subseteq \RR$, we can consider the edge coloring
$\chi_I$ where edges are blue if their slope is in $I$, and red otherwise. 
In this paper we have analyzed the case when $I=(0,\infty)$ and
showed that $\EE[\rec (D,\chi_{(0,+\infty)})]=\frac{1}{4} \cdot \binom{n}{4}$,
when the points are selected uniformly at random from the unit square.
We conjecture that the interval $I=(0,\infty)$ minimizes the expected number 
of monochromatic crossings, $\EE[\rec (D,\chi_I)]$, when the points are selected 
uniformly at random from the unit square. Note that one could also consider
sets $I\subset \RR$ that are not intervals.

We have made some small experimental search that justifies this conjecture.
For this, we considered several different intervals $I$ and several 
random choices of four points in the unit square, 
and then counted for each interval how many of the choices would contribute 
a monochromatic coloring. The results are consistent with the conjecture,
but not conclusive. For example, to distinguish the interval 
$(2^{-16},+\infty)$ from $(0,+\infty)$ with enough confidence, one would have
to make many repetitions and start being careful with generating the 
random points with enough precision.
The other natural candidate that uses symmetry, $I=(-1,+1)$, which
means coloring blue the near-horizontal edges, and red the near-vertical edges, 
does behave worse in the experiments.
 
It seems that one could approach the problem of computing
$\EE[\rec (D,\chi_{(\alpha,\beta)})]$ analytically, as a function 
of $\alpha$ and $\beta$, but quickly one runs into many cases that makes 
the analysis long and cumbersome.

\section*{Acknowledgments}

This work was carried out during \emph{Crossing Number Workshop 2022, Strobl, Austria.} We thank the organizers and participants for providing a fruitful research environment.

Funded in part by the Slovenian Research and Innovation Agency (P1-0297, J1-2452, N1-0218, N1-0285).
Funded in part by the European Union (ERC, KARST, project number 101071836). Views and opinions expressed are however those of the authors only and do not necessarily reflect those of the European Union or the European Research Council. Neither the European Union nor the granting authority can be held responsible for them. 
J.T. was supported by Charles University project UNCE/SCI/004. 
A.W. was supported by the Vanier Canada Graduate Scholarships program.




\bibliographystyle{abbrv}

\bibliography{col_random}

\begin{thebibliography}{1}

\bibitem{2colored}
O.~Aichholzer, R.~Fabila-Monroy, A.~Fuchs, C.~Hidalgo-Toscano, I.~Parada,
  B.~Vogtenhuber, and F.~Zaragoza.
\newblock On the 2-colored crossing number.
\newblock In {\em Graph drawing and network visualization}, volume 11904 of
  {\em Lecture Notes in Comput. Sci.}, pages 87--100. Springer, Cham, 2019.

\bibitem{2page}
J.~Bla\v{z}ek and M.~Koman.
\newblock A minimal problem concerning complete plane graphs.
\newblock In {\em Theory of {G}raphs and its {A}pplications ({P}roc. {S}ympos.
  {S}molenice, 1963)}, pages 113--117. Publ. House Czech. Acad. Sci., Prague,
  1964.

\bibitem{Nosarzewska}
M.~Nosarzewska.
\newblock Évaluation de la différence entre l'aire d'une région plane
  convexe et le nombre des points aux coordonnées entières couverts par elle.
\newblock {\em Colloquium Mathematicum}, 1:305--311, 1948.

\bibitem{4point}
J.~J. Sylvester.
\newblock Problem 1491.
\newblock {\em The Educational Times}, April 1864.

\bibitem{pavel}
P.~Valtr.
\newblock Probability that {$n$} random points are in convex position.
\newblock {\em Discrete Comput. Geom.}, 13(3-4):637--643, 1995.

\bibitem{square}
W.~Woolhouse.
\newblock Some additional observations on the four-point problem.
\newblock {\em Mathematical Questions and their Solutions from the Educational
  Times}, 7:81, 1867.

\end{thebibliography}



\end{document}